\newcommand{\R}{\mbox{${\rm{I\!R}}$}}
\newtheorem{theorem}{Theorem}
\newtheorem{lemma}[theorem]{Lemma}
\newtheorem{definition}{Definition}
\newtheorem{example}{Example}
\newtheorem{remark}{Remark}
\def\XX{{\mathbf X}}
\def\xx{{\mathbf x}}
\def\uu{{\mathbf u}}
\def\UU{{\mathbf U}}
\begin{document}

\title{ \Large\bf Modeling directional monotonicity with copulas}
\author{Enrique de Amo$^{\rm a}$, David García-Fernández$^{\rm b}$, José Juan Quesada-Molina$^{\rm c}$,\\ Manuel Úbeda-Flores$^{\rm a}$\footnote{Corresponding author}\\
\small{$^{\rm a}$Department of Mathematics, University of Almería, 04120 Almería, Spain}\\
\small{\texttt{edeamo@ual.es,\,\,\,mubeda@ual.es}}\\
\small{$^{\rm b}$Research Group of Theory of Copulas and Applications, University of Almería, 04120 Almería, Spain}\\
\small{\texttt{davidgfret@correo.ugr.es}}\\
\small{$^{\rm c}$Department of Applied Mathematics, University of Granada, 18071 Granada, Spain}\\
\small{\texttt{jquesada@ugr.es}}
}
\maketitle

\begin{abstract}
    The purpose of this paper is to characterize the concept of monotonicity according to a direction related to a set of $n$ random variables in terms of its associated $n$-copula C. We start establishing relationships in the bivariate and trivariate cases, which help to understand the extension to the multivariate case. Examples of copulas in all the studied cases are provided.
\end{abstract}

\section{Introduction}\label{sec:copulas}
$\phantom{999}$Dependence among random variables is a widely field of research in statistics and probability.  Analyzing this dependence structure is crucial when we want to figure out the behaviour of a complex model components. Therefore, a deep research of dependence relations could bring us comprehensively information about the model of interest.

Random variables can be related in several ways, presenting different relationships of dependence. One of the most important in literature is positive dependence, which can be characterize as the inclination of components within a random vector to assume concordant values. Negative dependence can be defined similarly, with the exception that now, random variables values moves in different directions. Both of these concepts may not be enough to condense all the  dependence that the variables show. It is for that reason that on this paper, we will focus on the concept of monotonicity according to a direction $\alpha\in\mathbb{R}^n$ defined in \cite{Que2024}, whose positive (respectively, negative) dependence concept is denoted by I$(\alpha)$ (respectively, D$(\alpha)$).

Copulas ---multivariate distribution functions with univariate uniform marginals--- serve as a valuable tool for examining the positive dependence characteristics of a random vector. This is because they encapsulate the dependence structure of the corresponding multivariate distribution function, regardless of the individual marginal distributions \cite{Ne06}. Additionally, they provide scale-free measures of dependence and serve as a foundation for constructing families of distributions \cite{Fisher97}.

In this paper, as we have already mentioned, our objective is to explore multivariate copulas associated to random vectors that exhibit the I($\alpha$) property---we will refer to these as I($\alpha$) copulas. However, for simplicity---and to better understand the general case---, we will start with the bivariate and trivariate cases and then we will extrapolate our results to the multivariate case.

This paper is organized as follows: we begin with some preliminary concepts and results corresponding to multivariate dependence, specifically monotonicity according to a direction, and to copula theory (Section \ref{sec:pre}). Straightaway, in Section \ref{sec:main}, we will characterize the concept of I$(\alpha)$ for the bivariate case using copulas, providing several examples of copulas that have this kind of dependence in different directions $\alpha\in\R^2$. Comparably, we will carry out a similar study in the trivariate case. Once we get used to this concepts in lower dimensions, we will extrapolate the characterization using copulas to the multivariate case, formulating a general result, allowing us to obtain an inequality, for every direction $\alpha\in\R^n$, in terms of the $n$-copula associated with the random $n$-vector and its marginals copulas. Finally, Section \ref{sec:4} is devoted to discuss the conclusions derived from our research.

\section{Preliminaries}\label{sec:pre}

$\phantom{999}$Let $n$ be a natural number such that $n\ge 2$, and let $\XX=(X_1,X_2,\ldots,X_n)$ be an $n$-dimensional random vector. In the following, the expression ``nonincreasing in $\mathbf{x}$''--—and similarly for nondecreasing---means that it is nonincreasing in each of the components of $\mathbf{x}=(x_1,x_2,\ldots,x_n)$, and $\mathbf{X} \leq \mathbf{x}$ means $X_i \leq x_i$ for all $i=1,2,\ldots,n$:

Regarding multivariate dependence, the following two positive dependence notions, introduced in \cite{Ha70}, are widely known:
\begin{itemize}
    \item [i)] $\XX$ is \emph{left corner set decreasing}, denoted by LCSD$(\XX)$, if 
    $$\mathbb{P}[\XX\leq \xx| \XX\leq \xx']\text{ is nonincreasing in } \xx'\text{ for all }\xx.$$
    \item [ii)] $\XX$ is \emph{right corner set increasing}, denoted by RCSI$(\XX)$, if 
    $$\mathbb{P}[\XX> \xx| \XX> \xx']\text{ is nondecreasing in } \xx'\text{ for all }\xx.$$
\end{itemize}
The equivalent negative dependence concepts LCSI$(\XX)$ \emph{(left corner set increasing)} and RCSD$(\XX)$ \emph{(right corner set decreasing)} are defined exchanging ``nondecreasing" and ``nonincreasing" in their respective expressions.

These concepts of multivariate positive dependence can be extended to the concept of monotonicity according to a direction, defined bellow, which allow us to capture new dependency structures among random variables.
\begin{definition}[\cite{Que2024}]
    Let $\XX$ be a $n$-dimensional random vector and $\alpha=(\alpha_1,\alpha_2,...,\alpha_n)\in\R^n$ such that $|\alpha_i|=1$ for all $i=1,2,...,n$. The random vector $\XX$, or its joint distribution function, is said to be increasing (respectively, decreasing) according to the direction $\alpha$, denoted by $I(\alpha)$ (respectively, $D(\alpha))$, if 
    $$\mathbb{P}[\alpha \XX>{\bf x}|\alpha \XX> {\bf x'}]$$
    is nondecreasing (respectively, nonincreasing) in ${\bf x'}$ for all ${\bf x}$.
\end{definition}

In this paper we will focus on the I$(\alpha)$ dependence concept. Similar results can be obtain for D$(\alpha)$. Note that the concept I$(\alpha)$ generalizes the RCSI and LCSD concepts mentioned above; that is, LCSD corresponds to I$({\bf -1})$ and RCSI corresponds to I$({\bf 1})$, where ${\bf 1}=(1,1,\ldots,1)$.


Now we recall some notions related to copulas. For $n\ge 2$, an $n$-dimensional {\it copula} ($n$-copula, for short) is the restriction to $[0,1]^n$ of a continuous {\it n}-dimensional distribution function whose univariate marginals are uniform on $[0,1]$. The importance of copulas in statistics is described in the following result due to Abe Sklar \cite{Sk59}: Let ${\bf X}=(X_1,X_2,\ldots,X_n)$ be a random vector with joint distribution function $F$ and one-dimensional marginal distributions $F_{1},F_2,\ldots,F_{n}$, respectively. Then there exists an $n$-copula $C$, which is uniquely determined on $\times_{i=1}^{n}$Range$F_{i}$, such that
$$F({\bf x})=C(F_{1}(x_{1}),F_2(x_2),\ldots,F_{n}(x_{n}))\quad\rm{ for}\,\,\rm{ all}\,\,{\bf x}\in [-\infty,+\infty]^{n}$$
(for a complete proof of this result, see \cite{Ub2017}). Thus, copulas link joint distribution functions to their one-dimensional marginals. For a survey on copulas, see \cite{Durante2016book,Ne06}, and for some results about positive dependence properties by using copulas can be seen, for instance, in \cite{Joe1997,Mu06,Navarro2021,Ne06,Wei2014}.

Let $\Pi^n$ denote the $n$-copula for independent random variables (or product $n$-copula), i.e., $\Pi^n({\bf u})=\prod_{i=1}^{n}u_i$ for all ${\bf u}=(u_1,u_2,\ldots,u_n)\in[0,1]^n$.

For any {\it n}-copula $C$ we have
$$W^{n}({\bf u})=\max\left\{0,\sum_{i=1}^{n}u_{i}-n+1\right\}\le C({\bf u})\le \min\{u_{1},u_{2},\ldots,u_{n}\}=M^{n}({\bf u})$$
for all ${\bf u}$ in $[0,1]^{n}$. $M^{n}$ is an {\it n}-copula for all $n\ge 2$; however, $W^{n}$ is an {\it n}-copula only when $n=2$.

The definition of $d$-marginal of an $n$-copula $C$, where $1\leq d<n$, which will be useful in the study of dependence in higher dimensions, is as follows. Let $\XX$ be an $n$-dimensional random vector with associated $n$-copula $C$ and let $\sigma=(i_n,\ldots,i_m)\subseteq\{1,\ldots,n\}$ such that $1\leq m\leq n-1$. The $\sigma$-marginal copula of $C$, $C_\sigma:[0,1]^m\longrightarrow [0,1]$, is defined by setting $n-m$ arguments of $C$ equal to $1$, i.e.,
$$C_{\sigma}(u_1,\ldots,u_m)=C(v_1,\ldots,v_n),$$
where $v_i=u_i$ if $i\in\{i_1,\ldots,i_m\}$, and $v_i=1$ otherwise.

Finally, given a random vector $(X_1,X_2,\ldots,X_n)$ with $n$-copula $C$, the {\it survival} $n$-{\it copula associated with} $C$, which we denote by $\widehat{C}$, is given by
$$\widehat{C}({\bf u})=\mathbb{P}\left[X_1\ge 1-u_1,X_2\ge 1-u_2,\ldots,X_n\ge 1-u_n\right]$$
for all ${\bf u}\in[0,1]^n$.

The next section is devoted to the study of I$(\alpha)$ $n$-copulas.

\section{Monotonicity according to a direction and copulas}
\label{sec:main}

$\phantom{999}$In this section, we will characterize the I$(\alpha)$ concept for the bivariate case by using copulas, providing various examples that exhibit this type of dependence in different directions. Similarly, we will conduct an analogous analysis in the trivariate case. Once we become familiar with these concepts in lower dimensions, we will extend the description using $n$-copulas to the multivariate case, establishing a general result that enables us to derive a characterization for any direction $\alpha \in \mathbb{R}^n$ in terms of the n-copula associated with the random $n$-vector and its marginal $d$-copulas.

\subsection{The bivariate case}

$\phantom{999}$We start our study on the I$(\alpha)$ dependence concept for copulas with the bivariate case.

\begin{theorem}\label{th:bi}Let $(U,V)$ be a random pair with associated 2-copula $C$. Then $C$ is:
\begin{itemize}
\item[i.] I$(-1,-1)$ if, and only if, $C(u,v)C(u',v')\ge C(u,v')C(u',v)$ for all $u,v,u',v'$ in $[0,1]$ such that $u\le u'$ and $v\le v'$;
\item [ii.] $I(1,-1)$ if, and only if, $[v-C(u,v)][v'-C(u',v')]\leq [v-C(u',v)][v'-C(u,v')]$ for all $u,v,u',v'$ in $[0,1]$ such that $u\leq u'$ and $v\leq v'$; 
\item [iii.] $I(-1,1)$ if, and only if, $[u-C(u,v)][u'-C(u'.v')]\leq [u'-C(u',v)][u-C(u,v')]$ for all $u,v,u',v'$ in $[0,1]$ such that $u\leq u'$ and $v\leq v'$;  
\item[iv.] I$(1,1)$ if, and only if, $\widehat{C}(u,v)\widehat{C}(u',v')\ge\widehat{C}(u,v')\widehat{C}(u',v)$ for all $u,v,u',v'$ in $[0,1]$ such that $u\le u'$ and $v\le v'$.
\end{itemize}
\end{theorem}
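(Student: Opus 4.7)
The plan is to reduce all four cases to a single principle: for a nonnegative, coordinatewise monotone bivariate function $g$, the ratio $g(\xx\vee\xx')/g(\xx')$ is nondecreasing in $\xx'$ for every $\xx$ if and only if $g$ is TP2, i.e., $g(\xx_1)g(\xx_2)\le g(\xx_1\vee\xx_2)g(\xx_1\wedge\xx_2)$ for all $\xx_1,\xx_2$. Since $\{\alpha\XX>\xx\}\cap\{\alpha\XX>\xx'\}=\{\alpha\XX>\xx\vee\xx'\}$, the conditional probability appearing in the definition of I$(\alpha)$ equals $g_\alpha(\xx\vee\xx')/g_\alpha(\xx')$, where $g_\alpha(\xx'):=\mathbb{P}[\alpha\XX>\xx']$. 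Consequently, I$(\alpha)$ is equivalent to TP2 of $g_\alpha$, a classical equivalence which I would either quote from the stochastic-orders literature or prove in a few lines.

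Next, using the uniform marginals of $U$ and $V$, I would write $g_\alpha$ explicitly for each sign pattern: $g_{(1,1)}(x,y)=\widehat{C}(1-x,1-y)$; $g_{(-1,-1)}(x,y)=C(-x,-y)$; $g_{(1,-1)}(x,y)=(-y)-C(x,-y)$; and $g_{(-1,1)}(x,y)=(-x)-C(-x,y)$, each on the region where every argument lies in $[0,1]$ (outside that region the TP2 inequality is trivial). Substituting these formulas into the TP2 inequality for $g_\alpha$ and relabelling so that the new variables $u,u',v,v'$ lie in $[0,1]$ and satisfy $u\le u'$, $v\le v'$ yields the four stated inequalities. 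In particular, (i) and (iv) recover the classical characterizations of LCSD and RCSI in terms of TP2 of $C$ and of $\widehat{C}$, respectively, while in the mixed cases the flip of sign in exactly one coordinate reverses the max/min there, which turns the TP2 inequality into the reversed-product form displayed in (ii) and (iii).

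A cleaner alternative for the mixed cases is to reduce them to (iv). Indeed, I$(1,-1)$ of $(U,V)$ coincides with I$(1,1)$ of $(U,1-V)$, whose associated 2-copula is $C^{*}(u,v)=u-C(u,1-v)$ and whose survival copula is $\widehat{C^{*}}(u,v)=v-C(1-u,v)$; applying part (iv) to $C^{*}$ and relabelling the arguments produces exactly the inequality in (ii). Part (iii) is handled symmetrically via the pair $(1-U,V)$.

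The step I expect to require the most care is the TP2 characterization in the first paragraph: one must verify that the monotonicity of $\xx'\mapsto g_\alpha(\xx\vee\xx')/g_\alpha(\xx')$ for every $\xx$ is not weaker than full TP2, and that the argument goes through for the non-uniform marginals of $\alpha\XX$ (when $\alpha_i=-1$, the $i$-th marginal of $\alpha\XX$ is uniform on $[-1,0]$). The remaining work---tracking $\max/\min$ under sign flips and choosing the substitutions that land the new variables in $[0,1]$---is mechanical bookkeeping, and the $(U,1-V)$ and $(1-U,V)$ reductions sidestep it entirely for the mixed cases.
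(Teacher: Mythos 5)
Your proof is correct, but it takes a genuinely different route from the paper's. The paper quotes parts i.\ and iv.\ from Nelsen and proves part ii.\ from scratch: it unwinds the definition of I$(1,-1)$ into a ratio inequality involving $\max$'s and $\min$'s, specializes the free variables to extract the displayed inequality, and for the converse runs a three-case analysis on the relative positions of $(u,w)$ and $(u',w')$; part iii.\ is declared analogous. You instead factor all four parts through the single equivalence ``$C$ is I$(\alpha)$ iff $g_\alpha(\xx')=\mathbb{P}[\alpha\XX>\xx']$ is TP2,'' and then either substitute the explicit formulas for $g_\alpha$ (which all check out) or, more elegantly, reduce the mixed cases to part iv.\ via the flip $(U,V)\mapsto(U,1-V)$, whose copula is $u-C(u,1-v)$ and whose survival copula $v-C(1-u,v)$ turns the TP2 inequality of part iv.\ into exactly the inequality of part ii.\ after relabelling $1-u'\mapsto u$. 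This buys uniformity and economy: the paper's three-case converse argument is precisely the ``TP2 implies ratio-monotonicity'' half of your lemma in disguise, and the paper repeats that argument essentially verbatim in its trivariate and $n$-variate theorems, whereas your lemma isolates it once and the flip reduction makes ii.\ and iii.\ immediate corollaries of the already-cited iv. What your route costs is that the TP2 equivalence still has to be established: the direction ``ratio monotone $\Rightarrow$ TP2'' is immediate (take $\xx=\xx_1$, $\xx'=\xx_1\wedge\xx_2$, $\xx''=\xx_2$), but ``TP2 $\Rightarrow$ ratio monotone'' needs the coordinatewise case split on whether $x_i'$ lies above or below $x_i$ (together with a convention where $g_\alpha$ vanishes), which is exactly the content of the paper's three cases; you correctly flag this as the one step requiring care, so there is no gap, only a relocation of the work.
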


\begin{proof}
The proof of parts i. and iv. can be found in \cite[Theorem 5.2.15 and Corollary 5.2.17]{Ne06}. Now we prove part ii.---the proof of part iii. is similar, and we omit it.

Let $(U,V)$ be a pair of random variables with associated 2-copula $C$. Assume $C$ is I$(1,-1)$, then we have that $\mathbb{P}[U>u,-V>w|U>u',-V>w']$ is nondecreasing in $(u',w')$ for all $(u,w)$, i.e.,
\begin{equation}\label{eq:partii}
\mathbb{P}[U>u,-V>w|U>u',-V>w']\le \mathbb{P}[U>u,-V>w|U>u'',-V>w'']
\end{equation}
for all $u,u',u''\in[0,1]$ and $w,w',w''\in[-1,0]$ such that $u'\le u''$ and $w'\le w''$. Equation \eqref{eq:partii} is equivalent to
$$\frac{\mathbb{P}[U>u,-V>w,U>u',-V>w']}{\mathbb{P}[U>u',-V>w']}\le \frac{\mathbb{P}[U>u,-V>w,U>u'',-V>w'']}{\mathbb{P}[U>u'',-V>w'']},$$
i.e.,
$$\frac{\mathbb{P}[U>u,V<-w,U>u',V<-w']}{\mathbb{P}[U>u',V<-w']}\le \frac{\mathbb{P}[U>u,V<-w,U>u'',V<-w'']}{\mathbb{P}[U>u'',V<-w'']};$$
therefore,
$$\frac{\mathbb{P}[U>\max\{u,u'\},V<\min\{-w,-w'\}]}{\mathbb{P}[U>u',V<-w']}\le \frac{\mathbb{P}[U>\max\{u,u''\},V<\min\{-w,-w''\}]}{\mathbb{P}[U>u'',V<-w'']},$$
that is,
$$\frac{\min\{-w,-w'\}-C(\max\{u,u'\},\min\{-w,-w'\})}{-w'-C(u',-w')}\le \frac{\min\{-w,-w''\}-C(\max\{u,u''\},\min\{-w,-w''\})}{-w''-C(u'',-w'')}.$$
Taking $v=-w$, $v'=-w'$ and $v''=-w''$, we obtain 
\begin{eqnarray*}\nonumber
[v''-C(u'',v'')]\cdot[\min\{v,v'\}\!\!\!&-&\!\!\!C(\max\{u,u'\},\min\{v,v'\})]\\ \label{eq:caracteriz}
\!\!\!&\le&\!\!\![v'-C(u',v')]\cdot[\min\{v,v''\}-C(\max\{u,u''\},\min\{v,v''\})]
\end{eqnarray*}
for all $u,v,u',v',u'',v''$ in $[0,1]$ such that $u'\le u''$ and $v''\le v'$. By setting $u=u'\leq u''$, $v'=v''$ and renaming $u'=u''$, it follows for every $v\leq v'$ and $u\leq u'$ in $[0,1]$ that
$$[v-C(u,v)][v'-C(u',v')]\leq [v-C(u',v)][v'-C(u,v')].$$

Conversely, if $[v-C(u,v)][v'-C(u'.v')]\leq [v-C(u',v)][v'-C(u,v')]$ for all $u,v,u',v'\in[0,1]$ such that $u\leq u'$ and $v\leq v'$, then,
given $u,u'\in [0,1]$ and $w,w'\in[-1,0]$, in order to prove that the random variables are I$(1,-1)$, we consider three possible cases:
\begin{enumerate}
\item If $u>u', w>w'$ we have that
$$\mathbb{P}[U>u,-V>w|U>u',-V>w']=\frac{\mathbb{P}[U>u,-V>w]}{\mathbb{P}[U>u',-V>w']}$$
is nondecreasing in $u',w'$.
\item If $u\leq u'$ and $w\leq w'$ we have that $\mathbb{P}[U>u,-V>w|U>u',-V>w']=1$, and therefore it is nondecresing in $u',w'$.
\item Let us consider, without loss of generality, $u\leq u'$ and $w>w'$. Then we have
$$\mathbb{P}[U>u,-V>w|U>u',-V>w']=\frac{\mathbb{P}[U>u',-V>w]}{\mathbb{P}[U>u',-V>w']}.$$
Since $\mathbb{P}[U>u',-V>w']\geq\mathbb{P}[U>u'',-V>w'']$ for all $u',u'',w',w''$ such that $u'\leq u''$ and $w'\leq w''$ we have that the equation above is nondecreasing in $w'$. So, in order to prove that this equation is nondecreasing in $u'$ considering $u''$ such that $u'\leq u''$, we need to verify that 
$$P[-V>w|U>u',-V>w']\leq \mathbb{P}[-V>w|U>u'',-V>w'],$$
which, taking $v=-w$ and $v'=-w'$, is equivalent to 
$$P[V<v|U>u',V<v']\leq \mathbb{P}[V<v|U>u'',V<v'],$$
Due to statement's inequality we have that 
$$\frac{v-C(u',v)}{v'-C(u',v')}\leq\frac{v-C(u'',v)}{v'-C(u'',v')}$$
for $u'\leq u''$ and $v\leq v'$. From this inequality we obtain, in terms of probability, that
$$\frac{\mathbb{P}[U>u',V<v]}{\mathbb{P}[U>u',V<v']}\leq\frac{\mathbb{P}[U>u'',V<v]}{\mathbb{P}[U>u'',V<v']}$$
\end{enumerate}
In all the cases we proved that $C$ is I$(1,-1)$, so the proof is complete.
\end{proof}

We provide several examples of $2$-copulas where we apply Theorem \ref{th:bi}.

\begin{example}The 2-copula $M^2$ is I($\alpha$) for $\alpha=(1,1)$ and $\alpha=(-1,-1)$. On the other hand, the 2-copula $W^2$ is I$(\alpha)$ for $\alpha=(-1,1)$ and $\alpha=(1,-1)$.
\end{example}

\begin{example}Let $\{C_{\delta}\}_{\delta\in[-1,1]}$ be the {\it Ali-Mikhail-Haq} one-parameter family of 2-copulas given by
$$C_{\delta}(u,v)=\frac{uv}{1+\delta(1-u)(1-v)}$$
for all $(u,v)\in[0,1]^2$ (see \cite{Ali1978}). It is easy to prove that $C_\delta$ is I$(-1,1)$ and I$(1,-1)$ for $\delta\in[0,1]$, and $C_\delta$ is I$(1,1)$ and I$(-1,-1)$ for $\delta\in[-1,0]$.
\end{example}

\begin{example}\label{ex:FGM}Let $\{C_{\lambda}\}_{\lambda \in [-1,1]}$ be the {\it Farlie-Gumbel-Morgenstern} (FGM) one-parameter family of 2-copulas given by
$$C_{\lambda}(u,v)=uv[1+\lambda(1-u)(1-v)]$$
for all $(u,v)\in[0,1]^2$ (see \cite{Ne06}). Then we have that $C_\lambda$ is I$(1,1)$ and I$(-1,-1)$ for $\lambda\in[0,1]$, and $C_\lambda$ is I$(-1,1)$ and I$(1,-1)$ for $\lambda\in[-1,0]$.
\end{example}




\subsection{The trivariate case}

$\phantom{999}$In this subsection we study the I($\alpha$) dependence concept in terms of 3-copulas. Despite the difficulty of working with three random variables and the complexity of the expressions obtained, drawn results take us to similar conclusions to the bivariate case.

\begin{theorem}\label{th:three}
    Let $(U,V,W)$ be three random variables with associated $3$-copula $C$. Then $C$ is:
    \begin{itemize}
        \item [i.] $I(1,1,1)$ if, and only if, $\widehat{C}(u,v,w)\widehat{C}(u',v',w')\geq \widehat{C}(u,v',w')\widehat{C}(u',v,w)$ for all $u,v, w, u',v', w'$ in $[0,1]$ such that $u\le u'$, $v\le v'$ and $w\leq w'$.
        \item [ii.] $I(1,1,-1)$ if, and only if, $[w-C_{23}(v,w)-C_{13}(u,w)+C(u,v,w)]\cdot[w'-C_{23}(v',w')-C_{13}(u',w')+C(u',v',w')]\leq [w-C_{23}(v',w)-C_{13}(u',w)+C(u',v',w)]\cdot[w'-C_{23}(v,w')-C_{13}(u,w')+C(u,v,w')]$ for all $u,v,w,u',v',w'$ in $[0,1]$ such that $u\leq u'$, $v\leq v'$ and $w\leq w'$;
        \item[iii.] $I(1,-1,1)$ if, and only if, $[v-C_{23}(v,w)-C_{12}(u,v)+C(u,v,w)]\cdot[v'-C_{23}(v',w')-C_{12}(u',v')+C(u',v',w')]\leq [v-C_{23}(v,w')-C_{12}(u',v)+C(u',v,w')]\cdot[v'-C_{23}(v',w)-C_{12}(u,v')+C(u,v',w)]$ for all $u,v,w,u',v',w'$ in $[0,1]$ such that $u\leq u'$, $v\leq v'$ and $w\leq w'$;
        \item[iv.] $I(-1,1,1)$ if, and only if, $[u-C_{13}(u,w)-C_{12}(u,v)+C(u,v,w)]\cdot[u'-C_{13}(u',w')-C_{12}(u',v')+C(u',v',w')]\leq [u-C_{13}(u,w')-C_{12}(u,v')+C(u,v',w')]\cdot[u'-C_{13}(u',w)-C_{12}(u',v)+C(u',v,w)]$ for all $u,v,w,u',v',w'$ in $[0,1]$ such that $u\leq u'$, $v\leq v'$ and $w\leq w'$;
        \item [v.] $I(1,-1,-1)$ if, and only if, $[C_{23}(v,w)-C(u,v,w)]\cdot[C_{23}(v',w')-C(u',v',w')]\leq [C_{23}(v,w)-C(u',v,w)]\cdot[C_{23}(v',w')-C(u,v',w')]$  for all $u,v,w,u',v',w'$ in $[0,1]$ such that $u\leq u'$, $v\leq v'$ and $w\leq w'$;
        \item [vi.] $I(-1,1,-1)$ if, and only if, $[C_{13}(u,w)-C(u,v,w)]\cdot[C_{13}(u',w')-C(u',v',w')]\leq [C_{13}(u,w)-C(u,v',w)]\cdot[C_{13}(u',w')-C(u',v,w')]$ for all $u,v,w,u',v',w'$ in $[0,1]$ such that $u\leq u'$, $v\leq v'$ and $w\leq w'$;
        \item [vii.] $I(-1,-1,1)$ if, and only if, $[C_{12}(u,v)-C(u,v,w)]\cdot[C_{12}(u',v')-C(u',v',w')]\leq [C_{12}(u,v,w')-C(u,v,w')]\cdot[C_{12}(u',v')-C(u',v',w)]$ for all $u,v,w,u',v',w'$ in $[0,1]$ such that $u\leq u'$, $v\leq v'$ and $w\leq w'$;
        \item[viii.] $I(-1,-1,-1)$ if, and only if, $C(u,v,w)C(u',v',w')\geq C(u,v',w')C(u',v,w)$ for all $u,v, w, u',v', w'$ in $[0,1]$ such that $u\le u'$, $v\le v'$ and $w\leq w'$.
    \end{itemize}
\end{theorem}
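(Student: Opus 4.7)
The plan is to transfer the template of Theorem~\ref{th:bi} to three dimensions, handling each of the eight directions $\alpha\in\{-1,1\}^3$ in a uniform way. For a fixed $\alpha$, the condition $I(\alpha)$ unfolds into the requirement that
\[
\mathbb{P}\bigl[U\diamond_1 u,\,V\diamond_2 v,\,W\diamond_3 w \,\big|\, U\diamond_1 u',\,V\diamond_2 v',\,W\diamond_3 w'\bigr]
\]
be monotone, in the appropriate direction for each coordinate, in $(u',v',w')$; here $\diamond_i$ is ``$>$'' if $\alpha_i=1$ and ``$<$'' if $\alpha_i=-1$. Writing this conditional as a ratio and then specialising the primed and doubly-primed variables, exactly as is done for part~ii of Theorem~\ref{th:bi}, reduces the whole question to comparing joint probabilities of the form $\mathbb{P}[U\diamond_1 a,\,V\diamond_2 b,\,W\diamond_3 c]$.

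The central computation is to express each such joint probability in terms of $C$ and its bivariate marginals via inclusion--exclusion on the indices where $\diamond_i$ is ``$>$''. The pure cases give $C(u,v,w)$ (part~viii) and the survival copula $\widehat C$ (part~i); when exactly one $\alpha_i$ equals $+1$ one gets a two-term expression such as $C_{23}(v,w)-C(u,v,w)$ (parts~v--vii); and when exactly two of the $\alpha_i$ equal $+1$ one obtains four-term expressions such as $w-C_{13}(u,w)-C_{23}(v,w)+C(u,v,w)$ (parts~ii--iv). Substituting the appropriate expression into the conditional monotonicity produces exactly the TP2-style inequality displayed in each part. For parts~i and viii the inequality is the standard TP2 condition on $\widehat C$ and $C$, which can be quoted from \cite[Theorem~5.2.15 and Corollary~5.2.17]{Ne06} as in the bivariate case. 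For the mixed-sign parts I would write out one case in detail, say part~ii with $\alpha=(1,1,-1)$, and observe that the remaining mixed cases arise from permuting the roles of $u,v,w$ and of the marginal copulas $C_{12},C_{13},C_{23}$, so the algebra is identical modulo relabelling.

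The converse proceeds by the same case analysis used in the bivariate proof: for fixed $(u,v,w)$ and variable $(u',v',w')$, one splits according to which components of $(u',v',w')$ dominate the corresponding components of $(u,v,w)$. In the all-dominating case the conditional probability is identically~$1$; in the all-non-dominating case the ratio reduces to a single quotient of joint probabilities on which the stated inequality gives monotonicity at once; and in each mixed sub-case the argument collapses, coordinate by coordinate, to the two-variable manipulation of Case~3 of the bivariate proof. The main obstacle is therefore combinatorial bookkeeping rather than new ideas: eight directions combined with up to $2^3$ sub-cases per converse produce a long list of formally identical verifications. The cleanest presentation is to tabulate the inclusion--exclusion formulas once and then invoke them case by case, so that each converse reduces to a direct application of the reasoning already developed for Theorem~\ref{th:bi}.
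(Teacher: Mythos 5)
Your proposal follows essentially the same route as the paper: write the conditional probability as a ratio, express each joint probability $\mathbb{P}[U\diamond_1 a, V\diamond_2 b, W\diamond_3 c]$ by inclusion--exclusion over the coordinates with $\alpha_i=1$ in terms of $C$ and its bivariate marginals, specialise the primed and doubly-primed variables to obtain the displayed inequality, and prove the converse by the same three-way case split on which components of $(u',v',w')$ dominate $(u,v,w)$; the paper likewise works out only part~ii in full and dispatches the remaining directions by relabelling. The only small caveat is that \cite[Theorem~5.2.15, Corollary~5.2.17]{Ne06} are stated for 2-copulas, so parts~i and viii of the trivariate theorem cannot literally be quoted from there, but they follow from the same inclusion--exclusion argument you already describe.
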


\begin{proof}
We prove part ii. The rest of the parts can be proved in a similar way, so we omit their proofs.

Let $(U,V,W)$ be three random variables with associated 3-copula $C$. Assume $C$ is I$(1,1,-1)$, then we have that $$\mathbb{P}[U>u,V>v, -W>t|U>u',V>v', -W>t']$$ is nondecreasing in $(u',v',t')$ for all $(u,v,t)$ in $[0,1]^3$, i.e.,
\begin{equation}\label{eq:tricase2}
\mathbb{P}[U>u,V>v, -W>t|U>u',V>v', -W>t']\le \mathbb{P}[U>u,V>v, -W>t|U>u'',V>v'', -W>t'']
\end{equation}
for all $u,v,u',v',u'',v''\in[0,1]$ and $t,t',t''\in[-1,0]$ such that $u'\le u''$, $v'\le v''$ and $t'\leq t''$.  Equation \eqref{eq:tricase2} is equivalent to
$$\frac{\mathbb{P}[U>u,V>v,-W>t,U>u',V>v',-W>t']}{\mathbb{P}[U>u',V>v',-W>t']}\le \frac{\mathbb{P}[U>u,V>v,-W>t,U>u'',V>v'',-W>t'']}{\mathbb{P}[U>u'',V>v'',-W>t'']},$$
i.e.,
$$\frac{\mathbb{P}[U>u,V>v,W<-t,U>u',V>v',W<-t']}{\mathbb{P}[U>u',V>v',W<-t']}\le \frac{\mathbb{P}[U>u,V>v,W<-t,U>u'',V>v'',W<-t'']}{\mathbb{P}[U>u'',V>v'',W<-t'']};$$
hence,
\begin{align*}
&\frac{\mathbb{P}[U>\max\{u,u'\},V>\max\{v,v'\}],W<\min\{-t,-t'\}}{\mathbb{P}[U>u',V>v',W<-t']}\\
\leq\hspace{0.1cm}&\frac{\mathbb{P}[U>\max\{u,u''\},V>\max\{v,v''\},W<\min\{-t,-t''\}]}{\mathbb{P}[U>u'',V>v'',W<-t'']},
\end{align*}
that is,
\begin{align*}
&\frac{\min\{-t,-t'\}-C_{23}(\max\{v,v'\},\min\{-t,-t'\})-C_{13}(\max\{u,u'\},\min\{-t,-t'\})}{-t'-C_{23}(v',-t')-C_{13}(u',-t')+C(u',v',-t')}\\
&+\frac{C(\max\{u,u'\},\max\{v,v'\},\min\{-t,-t'\})}{-t'-C_{23}(v',-t')-C_{13}(u',-t')+C(u',v',-t')}\\
&\leq\frac{\min\{-t,-t''\}-C_{23}(\max\{v,v''\},\min\{-t,-t''\})-C_{13}(\max\{u,u''\},\min\{-t,-t''\})}{-t''-C_{23}(v'',-t'')-C_{13}(u'',-t'')+C(u'',v'',-t'')}\\
&+\frac{C(\max\{u,u''\},\max\{v,v''\},\min\{-t,-t''\})}{-t''-C_{23}(v'',-t'')-C_{13}(u'',-t'')+C(u'',v'',-t'')}.
\end{align*}
Taking $w=-t$, $w'=-t'$ and $w''=-t''$ we obtain
\begin{eqnarray}\nonumber
[\min\{w,w'\}\!\!\!&-&\!\!\!C_{13}(\max\{u,u'\},\min\{w,w'\})-C_{23}(\max\{v,v'\},\min\{w,w'\})\\ \nonumber
            &+&\!\!\!C(\max\{u,u'\},\max\{v,v'\},\min\{w,w'\})]\cdot[w''-C_{13}(u'',w'')-C_{23}(v'',w'')+C(u'',v'',w'')]\\ \nonumber
            &\leq&\!\!\! [\min\{w,w''\}-C_{13}(\max\{u,u''\},\min\{w,w''\})-C_{23}(\max\{v,v''\},\min\{w,w''\})\\ 
            &+&\!\!\!C(\max\{u,u''\},\max\{v,v''\},\min\{w,w''\})]\cdot[w'-C_{13}(u',w')-C_{23}(v',w')+C(u',v',w')]\label{eq:tricharac}
\end{eqnarray}
for all $u,v,w,u',v', w',u'',v'', w''$ in $[0,1]$ such that $u'\le u''$, $v'\le v''$ and $w''\leq w'$. By setting $u=u'\leq u''$, $v=v'\leq v''$ and $w'=w''$ in Equation \eqref{eq:tricharac}, for every $w\leq w'$ in $[0,1]$ it follows (renaming $u'=u''$ and $v'=v''$)
\begin{align}\nonumber
    &[w-C_{23}(v,w)-C_{13}(u,w)+C(u,v,w)]\cdot[w'-C_{23}(v',w')-C_{13}(u',w')+C(u',v',w')]\\
    \leq\hspace{0.1cm} &[w-C_{23}(v',w)-C_{13}(u',w)+C(u',v',w)]\cdot[w'-C_{23}(v,w')-C_{13}(u,w')+C(u,v,w')].\label{eq:tricharac2}
\end{align}

Conversely, if Equation \eqref{eq:tricharac2} holds, then, given $u,u',v,v'\in [0,1]$ and $t,t'\in[-1,0]$, we consider three possible cases:
\begin{enumerate}
    \item If $u>u', v>v'$ and $ t>t'$ we have that
    $$\mathbb{P}[U>u,V>v,-W>t|U>u',V>v',-W>t']=\frac{\mathbb{P}[U>u,V>v,-W>t]}{\mathbb{P}[U>u',V>v',-W>t']}$$
    is nondecreasing in $u',v',t'$.
    \item If $u\leq u', v\leq v'$ and $t\leq t'$ we have that $\mathbb{P}[U>u,V>v,-W>t|U>u',V>v',-W>t']=1$, and therefore it is nondecresing in $u',v',t'$.
    \item Let us consider, without loss of generality, $u\leq u',v\leq v'$ and $t>t'$. Then we have
    \begin{align*}
        &\mathbb{P}[U>u,V>v,-W>t|U>u',V>v',-W>t']\\
        =\hspace{0.1cm}&\frac{\mathbb{P}[U>u,V>v,-W>t,U>u',V>v',-W>t']}{\mathbb{P}[U>u',V>v',-W>t']}\\
        =\hspace{0.1cm}&\frac{\mathbb{P}[U>u',V>v',-W>t]}{\mathbb{P}[U>u',V>v',-W>t']}.
    \end{align*}
    Since $\mathbb{P}[U>u',V>v',-W>t']\geq\mathbb{P}[U>v'',V>v'',-W>t'']$ for all $u',u'',v',v'',t',t''$ such that $u'\leq u'',v'\leq v'', t\leq t''$ we have that the expression above is nondecreasing in $t'$. In order to prove that this expression is nondecreasing in $u'$ and $v'$ considering $u''$ and $v''$ such that $u'\leq u''$ and $v'\leq v''$ we need to verify that 
    $$P[-W>t|U>u',V>v',-W>t']\leq \mathbb{P}[-W>t|U>u'',V>v'',-W>t'],$$
    which is equivalent to 
    $$P[W<w|U>u',V>v',W<w']\leq \mathbb{P}[W<w|U>u'',V>v'',W<w'],$$
    taking $w=-t$ and $w'=-t'$. Due to the statement of the inequality, we have 
    $$\frac{w-C_{23}(v',w)-C_{13}(u',w)+C(u',v',w)}{w'-C_{23}(v',w')-C_{13}(u',w')+C(u',v',w')}\leq\frac{w-C_{23}(v'',w)-C_{13}(u'',w)+C(u'',v'',w)}{w'-C_{23}(v'',w')-C_{13}(u'',w')+C(u'',v'',w')}$$
    for $u'\leq u'', v'\leq v''$ and $w\leq w'$. From this inequality we obtain, in terms of probability, that
    $$\frac{\mathbb{P}[U>u',V>v',W<w]}{\mathbb{P}[U>u',V>v',W<w']}\leq\frac{\mathbb{P}[U>u'',V>v'',W<w]}{\mathbb{P}[U>u'',V>v'',W<w']},$$
    which is equal to the desire expression.
 \end{enumerate}
Hence, we obtain that $C$ is $I(1,1,-1)$ in all the cases, so that the proof is complete.
\end{proof}

As an application of Theorem \ref{th:three}, we provide some examples.

\begin{example}
    The $3$-copula $M^3$ is $I(\alpha)$ for $\alpha=(1,1,1)$ and $\alpha=(-1,-1,-1)$.
\end{example}

\begin{example}\label{ex:FGM:three}Let $\{C_{\lambda}\}_{\lambda \in [-1,1]}$ be a generalization of the FGM one-parameter family of 2-copulas of Example \ref{ex:FGM}, and which is given by
$$C_{\lambda}(u_1,u_2,u_3)=u_1u_2u_3[1+\lambda(1-u_1)(1-u_2)(1-u_3)]$$
for all $u=(u_1,u_2,u_3)\in[0,1]^3$ (see \cite{Ne06}). Note that, in this case, the $2$-dimensional margins are given by $C_{i,j}(u_i,u_j)=u_iu_j$ for $1\leq i<j\leq 3$. Then, after some elementary calculus, we have $C_\lambda$ is I$(1,1,1)$, I$(1,-1,-1), I(-1,1,-1),$ and I$(-1,-1,1)$  for $\lambda\in[-1,0]$; and $C_\lambda$ is I$(-1,1,1)$, I$(1,-1,1)$, I$(1,1,-1)$ and I$(-1,-1,-1)$ for $\lambda\in[0,1]$.
\end{example}

\subsection{The multivariate case}

$\phantom{999}$Now, we will explore the $n$-dimensional case. To achieve our goals, we need the following preliminary lemma, which will allow us to relate the probability of the event $\bigcap_{i=1}^n (\alpha_i X_i > x_i)$ to the associated $n$-copula of the random vector $(X_1,X_2,\ldots,X_n)$ and its margins.

\begin{lemma}\label{lemma}
    Let $\XX=(X_1,X_2,\ldots,X_n)$ be an $n$-dimensional random vector and let $\alpha\in\R^n$ such that $|\alpha_i|=1$ for all $1\leq i\leq n$. Let $\alpha_k=-1$ for $k\in I=\{i_1,...,i_p\}\subseteq\{1,...,n\}$ and let $J=\{1,...,n\}/I$ $(I,J\neq\emptyset)$. For all $\xx \in \overline{\R}^n=[-\infty,\infty]^n$, we have 
    \begin{align*}
        \mathbb{P}\left[\bigcap_{i=1}^n(\alpha_iX_i>\overline{x}_i)\right]&=\mathbb{P}\left[\bigcap_{i\in I}(X_i\leq \overline{x}_i)\right]-\sum_{j\in J}\mathbb{P}\left[\bigcap_{i\in I\cup \{j\}}(X_i<\overline{x}_i)\right]\\
        &+\sum_{j_1\in J}\sum_{\substack{j_2\in J\\ j_2 >j_1}}\mathbb{P}\left[\bigcap_{i\in I\cup \{j_1,j_2\}}(X_i<\overline{x}_i)\right]-\cdots\\
        &+(-1)^{|J|-1}\sum_{j_1\in J}\sum_{\substack{j_2\in J\\ j_2 >j_1}}\cdots\sum_{\substack{j_{|J|-1}\in J\\ j_{|J|-1} >j_{|J|-2}}}\mathbb{P}\left[\bigcap_{i\in I\cup \{j_1,...,j_{|J|-1}\}}(X_i<\overline{x}_i)\right]\\
        &+(-1)^{|J|}\mathbb{P}\left[\bigcap_{i=1}^n(X_i<\overline{x}_i)\right],
    \end{align*}
    where 
    $$\overline{x}_i=\begin{cases}
        x_i, & \text{ if } i\in J,\\
        -x_i, & \text{ if } i\in I.
    \end{cases}$$
\end{lemma}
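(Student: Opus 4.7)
The plan is a direct inclusion--exclusion argument that treats the events indexed by $I$ as a fixed ``base'' intersection and the events indexed by $J$ as the variable ones. First I would unfold the direction $\alpha$: for $j\in J$ the event $\alpha_jX_j>\overline{x}_j$ reads $\{X_j>x_j\}$, while for $i\in I$ the combination $\alpha_i=-1$, $\overline{x}_i=-x_i$ yields $\{-X_i>-x_i\}=\{X_i<x_i\}$. (On the right-hand side the symbol $\overline{x}_i$ for $i\in I$ must therefore be interpreted as $x_i$; this is a mild abuse of notation, since after unfolding $\alpha_i$ the minus signs cancel.) Setting $A=\bigcap_{i\in I}\{X_i<x_i\}$ and $B_j=\{X_j\le x_j\}$ for $j\in J$, the event on the left-hand side becomes
$$A\cap\bigcap_{j\in J}B_j^{\,c}=A\setminus\bigcup_{j\in J}(A\cap B_j).$$

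Next I would take probabilities and apply the standard inclusion--exclusion formula to the union $\bigcup_{j\in J}(A\cap B_j)$ of $|J|$ events. Subtracting from $\mathbb{P}[A]$ and distributing the leading minus sign yields
$$\mathbb{P}\!\left[A\setminus\bigcup_{j\in J}(A\cap B_j)\right]=\mathbb{P}[A]+\sum_{k=1}^{|J|}(-1)^{k}\sum_{\substack{S\subseteq J\\|S|=k}}\mathbb{P}\!\left[A\cap\bigcap_{j\in S}B_j\right].$$
Each intersection $A\cap\bigcap_{j\in S}B_j$ coincides with $\bigcap_{i\in I\cup S}\{X_i<x_i\}$ (modulo the $\le$ versus $<$ distinction on the $j\in S$ coordinates, which is immaterial for probabilities under the continuity implicit in Sklar's theorem). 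Rewriting the sums over $k$-element subsets $S\subseteq J$ as the ordered nested sums $\sum_{j_1\in J}\sum_{j_2>j_1}\cdots\sum_{j_k>j_{k-1}}$ used in the statement then produces exactly the claimed alternating expansion, with the leading term $\mathbb{P}[\bigcap_{i\in I}(X_i\le\overline{x}_i)]$ corresponding to $k=0$ and the final term $(-1)^{|J|}\mathbb{P}[\bigcap_{i=1}^{n}(X_i<\overline{x}_i)]$ to $k=|J|$.

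The main obstacle is not conceptual but purely bookkeeping: matching the triple-nested index notation of the lemma with the cleaner ``sum over $k$-element subsets of $J$'' form, and verifying that the signs alternate correctly from $+$ at $k=0$ to $(-1)^{|J|}$ at $k=|J|$. Since every step is either set-theoretic rewriting or a plain instance of inclusion--exclusion, no hypothesis on $\xx\in\overline{\R}^n$ beyond measurability is required, and the identity holds in the full generality stated.
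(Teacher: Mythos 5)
Your argument is correct, and it reaches the lemma by a genuinely more direct route than the paper. You set $A=\bigcap_{i\in I}\{X_i<\overline{x}_i\}$, $B_j=\{X_j\le x_j\}$ for $j\in J$, rewrite the event as $A\setminus\bigcup_{j\in J}(A\cap B_j)$, and invoke the ready-made inclusion--exclusion formula for the union; the sum over $k$-element subsets $S\subseteq J$ then matches the lemma's nested ordered sums, with the correct signs $(-1)^{k}$ from $k=0$ to $k=|J|$. The paper instead proves the same combinatorial identity from scratch by induction on $|J|$: it peels off one index $l_{m+1}\in J$ at a time via $\mathbb{P}[x_{l}<X_{l}<\infty,\,\cdot\,]=\mathbb{P}[\,\cdot\,]-\mathbb{P}[X_{l}<\overline{x}_{l},\,\cdot\,]$ and recombines the two resulting expansions term by term. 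Your version is shorter because it delegates the bookkeeping to the standard inclusion--exclusion principle; the paper's is self-contained but has to track the regrouping of sums explicitly. Your two parenthetical caveats are also well taken and match the actual state of the paper: the statement is internally inconsistent about whether the left-hand thresholds are $x_i$ or $\overline{x}_i$ (the paper's own proof starts from $\alpha_iX_i>x_i$ and only then substitutes $\overline{x}_i=-x_i$ for $i\in I$, which is the resolution opposite to yours but mathematically equivalent), and the $\le$ versus $<$ discrepancy is harmless only because the lemma is applied to copulas, whose marginals are continuous --- a point the paper likewise glosses over.
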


\begin{proof}
    We are applying the induction method over the number of components of $\alpha$ that are equal to $1$; i.e., over the cardinal of $J$. We will consider the following change of variable throughout this proof: if $-X_i>x_i$, then $X_i<-x_i$ so we consider $\overline{x}_i=-x_i$ if $i\in I$ and $\overline{x}_i=x_i$ if $i\in J$. Let us start proving the case $|J|=1$:
    If we suppose that $\alpha_l=1$ and the remaining coordinates are all equal to $-1$, then 
    \begin{align*}
        \mathbb{P}\left[\bigcap_{i=1}^n(\alpha_i X_i>x_i)\right]&=\mathbb{P}\left[x_l<X_l<\infty,\bigcap_{\substack{i=1\\ i\neq l}}^n(X_i<\overline{x}_i)\right]\\
        &=\mathbb{P}\left[\bigcap_{\substack{i=1\\ i\neq l}}^n(X_i<\overline{x}_i)\right]-\mathbb{P}\left[\bigcap_{i=1}^n(X_i<\overline{x}_i)\right].
    \end{align*}
    Now we consider the statement true for $|J|=m$ and our goal is claiming the same for $|J|=m+1$. So, if $l_{m+1}\in J$ we have 
    \begin{align*}
\mathbb{P}\left[\bigcap_{i=1}^n(\alpha_iX_i>x_i)\right]&=\mathbb{P}\left[\bigcap_{i\in J}(X_i>x_i),\bigcap_{i\in I}(X_i<x_i)\right]\\
        &=\mathbb{P}\left[x_{l_{m+1}}<X_{l_{m+1}}<\infty,\bigcap_{\substack{i\in J\\ i\neq l_{m+1}}}(X_i>x_i),\bigcap_{i\in I}(X_i<\overline{x}_i)\right]\\
        &=\mathbb{P}\left[\bigcap_{\substack{i\in J\\ i\neq l_{m+1}}}(X_i>x_i),\bigcap_{i\in I}(X_i<\overline{x}_i)\right]-\mathbb{P}\left[\bigcap_{\substack{i\in J\\ i\neq l_{m+1}}}(X_i>x_i),\bigcap_{i\in I\cup\{l_{m+1}\}}(X_i<\overline{x}_i)\right].
    \end{align*}
    Applying the induction hypothesis we obtain that the first expression is equal to
    \begin{align*}
        &\mathbb{P}\left[\bigcap_{\substack{i\in J\\ i\neq l_{m+1}}}(X_i>x_i),\bigcap_{i\in I}(X_i<\overline{x}_i)\right]=\mathbb{P}\left[\bigcap_{i\in I}(X_i<\overline{x}_i)\right]-\sum_{\substack{j\in J\\ j\neq l_{m+1}}}\mathbb{P}\left[\bigcap_{i\in I\cup\{j\}}(X_i<\overline{x}_i)\right]\\
        &+\sum_{\substack{j_1\in J\\ j_1\neq l_{m+1}}}\sum_{\substack{j_2\in J\\ j_2\neq l_{m+1}\\j_2>j_1}}\mathbb{P}\left[\bigcap_{i\in I\cup\{j_1,j_2\}}(X_i < \overline{x}_i)\right]-\cdots+(-1)^m\mathbb{P}\left[\bigcap_{\substack{i= 1\\ i\neq l_{m+1}}}^n(X_i<\overline{x}_i)\right].
    \end{align*}
    Whereas the second one is equal to the following expression
       \begin{align*}
        &\mathbb{P}\left[\bigcap_{\substack{i\in J\\ i\neq l_{m+1}}}(X_i>x_i),\bigcap_{i\in I\cup\{l_{m+1}\}}(X_i<\overline{x}_i)\right]=\mathbb{P}\left[\bigcap_{i\in I\cup\{l_{m+1}\}}(X_i<\overline{x}_i)\right]-\sum_{\substack{j\in J\\ j\neq l_{m+1}}}\mathbb{P}\left[\bigcap_{i\in I\cup\{l_{m+1},j\}}(X_i<\overline{x}_i)\right]\\
        &+\sum_{\substack{j_1\in J\\ j_1\neq l_{m+1}}}\sum_{\substack{j_2\in J\\ j_2\neq l_{m+1}\\j_2>j_1}}\mathbb{P}\left[\bigcap_{i\in I\cup\{l_{m+1},j_1,j_2\}}(X_i< \overline{x}_i)\right]-\cdots+(-1)^m\mathbb{P}\left[\bigcap_{i=1}^n(X_i<\overline{x}_i)\right].
    \end{align*}
    Now, we subtract both expressions. Note that we can add the first term of the second expression to the second term of the first one; ie., 
    $$-\sum_{\substack{j\in J\\ j\neq l_{m+1}}}\mathbb{P}\left[\bigcap_{i\in I\cup\{j\}}(X_i<\overline{x}_i)\right]-\mathbb{P}\left[\bigcap_{i\in I\cup\{l_{m+1}\}}(X_i<\overline{x}_i)\right]=-\sum_{j\in J}\mathbb{P}\left[\bigcap_{i\in I\cup\{j\}}(X_i<\overline{x}_i)\right].$$
    It follows similarly with the third term of the first expression and the second term of the second one
    \begin{align*}
        &\sum_{\substack{j_1\in J\\ j_1\neq l_{m+1}}}\sum_{\substack{j_2\in J\\ j_2\neq l_{m+1}\\j_2>j_1}}\mathbb{P}\left[\bigcap_{i\in I\cup\{j_1,j_2\}}(X_i< \overline{x}_i)\right]+\sum_{\substack{j\in J\\ j\neq l_{m+1}}}\mathbb{P}\left[\bigcap_{i\in I\cup\{l_{m+1},j\}}(X_i<\overline{x}_i)\right]\\
        &=\sum_{j_1\in J}\sum_{\substack{j_2\in J\\j_2>j_1}}\mathbb{P}\left[\bigcap_{i\in I\cup\{j_1,j_2\}}(X_i< \overline{x}_i)\right],
    \end{align*}
    and so on with the remaining terms. Finally, our first and last terms are, respectively, 
    $$\mathbb{P}\left[\bigcap_{i\in I}(X_i<\overline{x}_i)\right]\hspace{0.7cm}\text{and}\hspace{0.7cm}(-1)^{m+1}\mathbb{P}\left[\bigcap_{i=1}^n(X_i<\overline{x}_i)\right].$$
    
    Thus, grouping all of these terms together we reach the desired result.
\end{proof}

\begin{remark}\label{remark1}
    Note that if the $n$-dimensional random vector $\XX$ has associated  $n$-copula $C$, then the expression of Lemma \ref{lemma} can be written in terms of the copula and its margins via Sklar's theorem. First term, $\mathbb{P}\left[\bigcap_{i\in I}(X_i\leq x_i)\right]$, is equivalent to the  $p$-dimensional margin of the copula, $C_{i_1,...,i_p}(u)$, where $u_i=F_i(x_i)$ if $i\in I$ ($F_i$ is the $i$-margin of $\XX$) and $u_i=1$ otherwise. Second term, $\sum_{j\in J}\mathbb{P}\left[\bigcap_{i\in I\cup \{j\}}(X_i<x_i)\right]$, is equal to the sum of all $(p+1)$-dimensional margins of the copula $C$, involving all the subscripts in $I$ and every other subscript in $J$, i.e., $\sum_{j\in J}C_{j,i_1,...,i_p}(u)$, where, comparably, $u_i=F_i(x_i)$ if $i\in I\cup\{j\}$  and $u_i=1$ otherwise. So, expression of Lemma \ref{lemma} is equivalent to 
    \begin{align*}
        C_{i_1,...,i_p}(u)-\sum_{j\in J}C_{j,i_1,...,i_p}(u)+\sum_{j_1\in J}\sum_{\substack{j_2\in J\\ j_2>j_1}}C_{j_1,j_2,i_1,...,i_p}(u)-\cdots+(-1)^{|J|}C(u).
    \end{align*}
\end{remark}

Thanks to Remark \ref{remark1}, we are now able to state and prove the pertinent theorem for the multivariate I$(\alpha)$ dependence concept. We want to stress that in the following theorem, we will take the liberty of considering the subscripts that appear in the marginal copulas without regard to order to facilitate their writing; that is, for example, the marginal copula $C_{12}$ will be the same as the marginal copula $C_{21}$.  

\begin{theorem}\label{th:mult}
    Let $\UU=(U_1,U_2,\ldots,U_n)$ be an $n$-dimensional random vector with associated $n$-copula $C$. Let $\alpha\in\R^n$, with $|\alpha_i|=1$ for all $1\leq i \leq n$. Suppose $\alpha_l=-1$ for $l\in I=\{i_1,...,i_p\}\subseteq \{1,...,n\}$, and let $J$ be $\{1,...,n\}\backslash I$, with $I,J\neq\emptyset$. Then $C$ is $I(\alpha)$ if, and only if,
    \begin{align}\label{eq:mainthmult}\nonumber
&\left[\sum_{k=0}^{|J|}\left(\sum_{\substack{j_{k}=1\\ j_k\in J}}\sum_{\substack{j_{k-1}>j_k\\ j_{k-1}\in J}}\cdots\sum_{\substack{j_{k-(|J|-1)}>j_{k-|J|}\\ j_{k-(|J|-1)}\in J}}(-1)^kC_{i_1,...,i_p,j_k,...,j_{k-(|J|-1)}}(\overline{u})\right)\right]\cdot\\ \nonumber
        &\left[\sum_{k=0}^{|J|}\left(\sum_{\substack{j_{k}=1\\ j_k\in J}}\sum_{\substack{j_{k-1}>j_k\\ j_{k-1}\in J}}\cdots\sum_{\substack{j_{k-(|J|-1)}>j_{k-|J|}\\ j_{k-(|J|-1)}\in J}}(-1)^kC_{i_1,...,i_p,j_k,...,j_{k-(|J|-1)}}(\overline{u}')\right)\right]\leq\\\nonumber
        &\left[\sum_{k=0}^{|J|}\left(\sum_{\substack{j_{k}=1\\ j_k\in J}}\sum_{\substack{j_{k-1}>j_k\\ j_{k-1}\in J}}\cdots\sum_{\substack{j_{k-(|J|-1)}>j_{k-|J|}\\ j_{k-(|J|-1)}\in J}}(-1)^kC_{i_1,...,i_p,j_k,...,j_{k-(|J|-1)}}(\overline{u}_1,...,u_{i_1}',...,u_{i_p}',...,\overline{u}_n)\right)\right]\cdot\\
        &\left[\sum_{k=0}^{|J|}\left(\sum_{\substack{j_{k}=1\\ j_k\in J}}\sum_{\substack{j_{k-1}>j_k\\ j_{k-1}\in J}}\cdots\sum_{\substack{j_{k-(|J|-1)}>j_{k-|J|}\\ j_{k-(|J|-1)}\in J}}(-1)^kC_{i_1,...,i_p,j_k,...,j_{k-(|J|-1)}}(\overline{u}_1',...,u_{i_1},...,u_{i_p},...,\overline{u}_n')\right)\right],
    \end{align}
     for all $u,u'\in [0,1]^n$ such that $u\leq u'$, where
    $$\overline{u}_h=\begin{cases}
        u_h, & \text{if } \exists\hspace{0.1cm} l:  j_l=h \text{ or } i_l=h,\\
        1, & \text{otherwise}
    \end{cases} \hspace{0.2cm} \text{ and }\hspace{0.2cm} \overline{u}'_h=\begin{cases}
        u'_h, & \text{if } \exists\hspace{0.1cm} l:  j_l=h \text{ or } i_l=h,\\
        1, & \text{otherwise.}
    \end{cases} $$
\end{theorem}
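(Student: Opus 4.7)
The strategy is to imitate the arguments of Theorems \ref{th:bi} and \ref{th:three}, with Lemma \ref{lemma} (together with Remark \ref{remark1}) serving as the engine that converts multivariate probabilities of the form $\mathbb{P}[\bigcap_{i=1}^n(\alpha_i X_i>x_i)]$ into alternating sums of marginal copulas of $C$. Because the statement covers every direction $\alpha$ with $I,J\neq\emptyset$ in a single formula, the proof will be one unified argument that specializes, in the cases $n=2$ and $n=3$ with mixed $\alpha$, to parts ii and iii of Theorem \ref{th:bi} and to parts ii--vii of Theorem \ref{th:three}, which serve as sanity checks on the general bookkeeping.

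For the forward implication, I would assume $C$ is $I(\alpha)$ and start from the defining monotonicity of $\mathbb{P}[\alpha\UU>\xx\mid\alpha\UU>\xx']$ in $\xx'$. For $\xx'\le\xx''$ this is equivalent to
\[
\mathbb{P}[\alpha\UU>\xx,\alpha\UU>\xx']\,\mathbb{P}[\alpha\UU>\xx'']\le \mathbb{P}[\alpha\UU>\xx,\alpha\UU>\xx'']\,\mathbb{P}[\alpha\UU>\xx'].
\]
Coordinatewise, $\{\alpha_i X_i>x_i\}\cap\{\alpha_i X_i>x_i'\}$ collapses to a single inequality ($>$ against the maximum for $i\in J$, and $<$ against the minimum after the sign flip for $i\in I$), so each of the four probabilities is precisely of the form treated by Lemma \ref{lemma}. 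Applying that lemma and then Remark \ref{remark1} to rewrite the resulting joint probabilities via Sklar's theorem turns the inequality into one among four alternating sums of marginal copulas of $C$, of dimensions ranging from $p=|I|$ up to $n$. The last step, just as in the bivariate and trivariate proofs, is to collapse endpoints by setting $x_i=x_i'\le x_i''$ for $i\in I$ and $x_i\le x_i'=x_i''$ for $i\in J$, and then pass to the uniform scale $u_h=F_h(x_h)$; after renaming $\xx''$ back to $\xx'$ this delivers \eqref{eq:mainthmult} exactly.

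For the converse, I would assume \eqref{eq:mainthmult} and, given $\xx,\xx'$, proceed by the same trichotomy used in the earlier proofs: if $\alpha_i x_i>\alpha_i x_i'$ for every $i$, then $\mathbb{P}[\alpha\UU>\xx\mid\alpha\UU>\xx']=\mathbb{P}[\alpha\UU>\xx]/\mathbb{P}[\alpha\UU>\xx']$ is manifestly nondecreasing in $\xx'$; if $\alpha_i x_i\le\alpha_i x_i'$ for every $i$, the conditional probability is $1$; in the mixed case, separate the coordinates into those where $\xx'$ already dominates $\xx$ in the $\alpha$-sense and the rest, fix the former, and verify nondecreasingness in each of the remaining coordinates by invoking \eqref{eq:mainthmult}. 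The chain of equivalences of the forward direction is fully reversible via Lemma \ref{lemma} and Remark \ref{remark1}, so \eqref{eq:mainthmult} yields exactly the inequality required in the mixed case.

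The main obstacle is combinatorial bookkeeping, not analysis. The four factors in \eqref{eq:mainthmult} are alternating sums indexed by subsets of $J$ of every size, and one has to track simultaneously which arguments are primed, which are unprimed, and which equal $1$ (the last being enforced by the definition of $\overline u_h$ and $\overline u_h'$). Aside from that administration, no new idea is needed beyond what already appears in the proof of Theorem \ref{th:three}(ii); the induction built into Lemma \ref{lemma} handles the growth in $|J|$, and the structure of the argument is identical --- only the expressions are longer and the index ranges more intricate.
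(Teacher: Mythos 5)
Your overall plan coincides with the paper's own proof: forward direction by cross-multiplying the monotonicity condition, collapsing the intersected events coordinatewise to maxima for $i\in J$ and (after the sign flip) minima for $i\in I$, converting the four probabilities via Lemma \ref{lemma} and Remark \ref{remark1}, and then specializing endpoints; converse direction by the same three-case analysis used for Theorems \ref{th:bi} and \ref{th:three}. The converse as you describe it matches the paper.

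There is, however, one concrete step that fails as written: your endpoint specialization has the roles of $I$ and $J$ reversed. You propose $x_i\le x_i'=x_i''$ for $i\in J$ and $x_i=x_i'\le x_i''$ for $i\in I$; the paper does the opposite, namely $x_i=x_i'\le x_i''$ for the coordinates in $J$, while for the coordinates in $I$ it identifies the two conditioning thresholds ($x_i'=x_i''$) and leaves $x_i$ free on the appropriate side. The distinction is not cosmetic. To obtain \eqref{eq:mainthmult} every coordinate must contribute \emph{two distinct} values distributed over the four factors in the pattern (all unprimed)$\cdot$(all primed) $\le$ (unprimed on $J$, primed on $I$)$\cdot$(primed on $J$, unprimed on $I$). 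With your choice, for $i\in J$ one gets $\max\{x_i,x_i'\}=x_i'=x_i''=\max\{x_i,x_i''\}$, so all four factors carry the \emph{same} $J$-th argument and the free value $x_i$ never survives into the inequality; what you obtain is only the degenerate case of \eqref{eq:mainthmult} with $u_j=u_j'$ for all $j\in J$. Already in the bivariate case $I(1,-1)$ your substitution turns the intermediate inequality $[v''-C(u'',v'')]\,[\min\{v,v'\}-C(\max\{u,u'\},\min\{v,v'\})]\le[v'-C(u',v')]\,[\min\{v,v''\}-C(\max\{u,u''\},\min\{v,v''\})]$ into the tautology $[v''-C(u',v'')][v-C(u',v)]\le[v-C(u',v)][v''-C(u',v'')]$ instead of into Theorem \ref{th:bi}(ii). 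The repair is simply to swap $I$ and $J$ in that step, exactly as in ``by setting $u=u'\le u''$, $v'=v''$'' in the proof of Theorem \ref{th:bi}(ii), where $u$ is the $J$-coordinate; with that correction your argument reproduces the paper's proof. (A smaller point: in the converse your case split should compare $x_i$ with $x_i'$ directly, as these are already the thresholds for $\alpha_i U_i$, rather than $\alpha_i x_i$ with $\alpha_i x_i'$.)
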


\begin{remark}
    Before proving Theorem \ref{th:mult}, we must note that in the expression above if the subscripts $j_{r}\le 0$ we do not take them into account for summing. For example, if $k=1$ we only consider $j_k=j_1$ therefore the term in the sums is $-C_{j_1,i_1,...,i_p}$.
\end{remark}

\begin{proof} 
   (Theorem \ref{th:mult}) Let $\UU=(U_1,U_2,\ldots,U_n)$ be an $n$-dimensional random vector with associated $n$-copula $C$. Since $C$ is I$(\alpha)$, we have that
    $$\mathbb{P}\left[\bigcap_{i\in J}(U_i>u_i),\bigcap_{i\in I}(-U_i>u_i)|\bigcap_{i\in J}(U_i>u_i'),\bigcap_{i\in I}(-U_i>u_i')\right]$$
    is nondecreasing in ${\bf u'}$ for all ${\bf u}$, i.e., 
    \begin{align*}
         &\mathbb{P}\left[\bigcap_{i\in J}(U_i>u_i),\bigcap_{i\in I}(-U_i>u_i)|\bigcap_{i\in J}(U_i>u_i'),\bigcap_{i\in I}(-U_i>u_i')\right]\\
         \leq &\mathbb{P}\left[\bigcap_{i\in J}(U_i>u_i),\bigcap_{i\in I}(-U_i>u_i)|\bigcap_{i\in J}(U_i>u_i''),\bigcap_{i\in I}(-U_i>u_i'')\right], 
         \end{align*}
         for all $u_i,u_i',u_i''\in [0,1]$ such that $i\in J$, and $u_i,u_i',u_i''\in [-1,0]$ such that $i\in I$, verifying that $u_i'\le u_i''$ for all $1\leq i \leq n$. The inequality above is equivalent to the following expression
         \begin{align*}
             &\frac{\mathbb{P}\left[\bigcap_{i\in J}(U_i>u_i),\bigcap_{i\in I}(-U_i>u_i),\bigcap_{i\in J}(U_i>u_i'),\bigcap_{i\in I}(-U_i>u_i')\right]}{\mathbb{P}\left[\bigcap_{i\in J}(U_i>u_i'),\bigcap_{i\in I}(-U_i>u_i')\right]}\\
             \leq & \frac{\mathbb{P}\left[\bigcap_{i\in J}(U_i>u_i),\bigcap_{i\in I}(-U_i>u_i),\bigcap_{i\in J}(U_i>u_i''),\bigcap_{i\in I}(-U_i>u_i'')\right]}{\mathbb{P}\left[\bigcap_{i\in J}(U_i>u_i''),\bigcap_{i\in I}(-U_i>u_i'')\right]},
         \end{align*}
         i.e., 
        \begin{align*}
             &\frac{\mathbb{P}\left[\bigcap_{i\in J}(U_i>u_i),\bigcap_{i\in I}(U_i<-u_i),\bigcap_{i\in J}(U_i>u_i'),\bigcap_{i\in I}(U_i<-u_i')\right]}{\mathbb{P}\left[\bigcap_{i\in J}(U_i>u_i'),\bigcap_{i\in I}(U_i<-u_i')\right]}\\
             \leq\hspace{0.1cm} & \frac{\mathbb{P}\left[\bigcap_{i\in J}(U_i>u_i),\bigcap_{i\in I}(U_i<-u_i),\bigcap_{i\in J}(U_i>u_i''),\bigcap_{i\in I}(U_i<-u_i'')\right]}{\mathbb{P}\left[\bigcap_{i\in J}(U_i>u_i''),\bigcap_{i\in I}(U_i<-u_i'')\right]},
         \end{align*} 
         and therefore,
         \begin{align*}
             &\frac{\mathbb{P}\left[\bigcap_{i\in J}(U_i>\max\{u_i,u_i'\}),\bigcap_{i\in I}(U_i<\min\{-u_i,-u_i'\}),\right]}{\mathbb{P}\left[\bigcap_{i\in J}(U_i>u_i'),\bigcap_{i\in I}(U_i<-u_i')\right]}\\
             \leq\hspace{0.1cm} & \frac{\mathbb{P}\left[\bigcap_{i\in J}(U_i>\max\{u_i,u_i''\}),\bigcap_{i\in I}(U_i<\min\{-u_i,-u_i''\})\right]}{\mathbb{P}\left[\bigcap_{i\in J}(U_i>u_i''),\bigcap_{i\in I}(U_i<-u_i'')\right]}.
         \end{align*} 
     Using Lemma \ref{lemma} and Remark \ref{remark1}, it is possible to express this inequality in terms of copulas as follows  
     \begin{align*}
         &\frac{ C_{i_1,...,i_p}(u^{(1)})-\sum_{j\in J}C_{j,i_1,...,i_p}(u^{(1)})+\sum_{j_1\in J}\sum_{\substack{j_2\in J\\ j_2>j_1}}C_{j_1,j_2,i_1,...,i_p}(u^{(1)})-\cdots+(-1)^{|J|}C(u^{(1)})}{ C_{i_1,...,i_p}(\overline{u}')-\sum_{j\in J}C_{j,i_1,...,i_p}(\overline{u}')+\sum_{j_1\in J}\sum_{\substack{j_2\in J\\ j_2>j_1}}C_{j_1,j_2,i_1,...,i_p}(\overline{u}')-\cdots+(-1)^{|J|}C(\overline{u}')}\\
         \leq\hspace{0.1cm} & \frac{ C_{i_1,...,i_p}(u^{(2)})-\sum_{j\in J}C_{j,i_1,...,i_p}(u^{(2)})+\sum_{j_1\in J}\sum_{\substack{j_2\in J\\ j_2>j_1}}C_{j_1,j_2,i_1,...,i_p}(u^{(2)})-\cdots+(-1)^{|J|}C(u^{(2)})}{ C_{i_1,...,i_p}\overline{u}'')-\sum_{j\in J}C_{j,i_1,...,i_p}(\overline{u}'')+\sum_{j_1\in J}\sum_{\substack{j_2\in J\\ j_2>j_1}}C_{j_1,j_2,i_1,...,i_p}(\overline{u}'')-\cdots+(-1)^{|J|}C(\overline{u}'')},
     \end{align*}
    where 
    $$u^{(1)}_i=\begin{cases}
        \max\{u_i,u_i'\}, & \text{ if } i\in J;\\
        \min\{-u_i,-u_i'\}, & \text{ if } i\in I;
    \end{cases} \hspace{0.5cm}\text{and}\hspace{0.5cm} u^{(2)}_i=\begin{cases}
        \max\{u_i,u_i''\}, & \text{ if } i\in J;\\
        \min\{-u_i,-u_i''\}, & \text{ if } i\in I,
    \end{cases}$$
and
       $$\overline{u}'_i=\begin{cases}
        u'_i, & \text{ if } i\in J;\\
        -u'_i, & \text{ if } i\in I;
    \end{cases} \hspace{0.5cm}\text{and}\hspace{0.5cm}\overline{u}''_i=\begin{cases}
       u''_i , & \text{ if } i\in J;\\
        -u''_i, & \text{ if } i\in I.
    \end{cases}$$
    
     By setting $u_i=u_i'\leq u_i''$ if $i\in J$ and $u_i\leq u_i'=u_i''$ if $i \in I$, we obtain 
     \begin{align*}
         &\left[C_{i_1,...,i_p}(u)-\sum_{j\in J}C_{j,i_1,...,i_p}(u)+\sum_{j_1\in J}\sum_{\substack{j_2\in J\\ j_2>j_1}}C_{j_1,j_2,i_1,...,i_p}(u)-\cdots+(-1)^{|J|}C(u)\right]\cdot\\
         &\left[C_{i_1,...,i_p}(u')-\sum_{j\in J}C_{j,i_1,...,i_p}(u')+\sum_{j_1\in J}\sum_{\substack{j_2\in J\\ j_2>j_1}}C_{j_1,j_2,i_1,...,i_p}(u')-\cdots+(-1)^{|J|}C(u')\right]\\
         &\leq \left[C_{i_1,...,i_p}(u_1,...,u_{i_1}',...,u_{i_p}',...,u_n)-\sum_{j\in J}C_{j,i_1,...,i_p}(u_1,...,u_{i_1}',...,u_{i_p}',...,u_n)\right.\\
         &\left.+\sum_{j_1\in J}\sum_{\substack{j_2\in J\\ j_2>j_1}}C_{j_1,j_2,i_1,...,i_p}((u_1,...,u_{i_1}',...,u_{i_p}',...,u_n)-\cdots+(-1)^{|J|}C(u_1,...,u_{i_1}',...,u_{i_p}',...,u_n)\right]\cdot\\
         &\left[C_{i_1,...,i_p}(u_1',...,u_{i_1},...,u_{i_p},...,u'_n)-\sum_{j\in J}C_{j,i_1,...,i_p}(u_1',...,u_{i_1},...,u_{i_p},...,u_n')\right.\\
         &\left. +\sum_{j_1\in J}\sum_{\substack{j_2\in J\\ j_2>j_1}}C_{j_1,j_2,i_1,...,i_p}(u_1',...,u_{i_1},...,u_{i_p},...,u_n')-\cdots+(-1)^{|J|}C(u_1',...,u_{i_1},...,u_{i_p},...,u_n')\right].
     \end{align*}
It is possible to compact the expression above as it follows
$$\sum_{k=0}^{|J|}\left(\sum_{\substack{j_{k}=1\\ j_k\in J}}\sum_{\substack{j_{k-1}>j_k\\ j_{k-1}\in J}}\cdots\sum_{\substack{j_{k-(|J|-1)}>j_{k-|J|}\\ j_{k-(|J|-1)}\in J}}(-1)^kC_{i_1,...,i_p,j_k,...,j_{k-(|J|-1)}}(u)\right).$$
Thus, we obtain the desired inequality.

Conversely, if we suppose that the Inequality \eqref{eq:mainthmult} holds for every $\uu,\uu'\in [0,1]^n$ such that $u_i\leq u_i'$ for all $1\leq i\leq n$, given $\uu,\uu'\in [0,1]^n$ such that $u_i, u_i'\in [0,1]$ if $i\in J$ and $u_i,u_i'\in[-1,0]$ if $i\in I$ we consider three possible cases in order to prove that $C$ is I$(\alpha)$:
     \begin{itemize}
         \item [i)] If $u_i> u_i'$ for all $1\le i\le n$ we have that
         $$\mathbb{P}\left[\bigcap_{i\in J}(U_i>u_i),\bigcap_{i\in I}(-U_i>u_i)|\bigcap_{i\in J}(U_i>u_i'),\bigcap_{i\in I}(-U_i>u_i')\right]=\frac{\mathbb{P}\left[\bigcap_{i\in J}(U_i>u_i),\bigcap_{i\in I}(-U_i>u_i)\right]}{\mathbb{P}\left[\bigcap_{i\in J}(U_i>u_i'),\bigcap_{i\in I}(-U_i>u_i')\right]}$$
         is nondecreasing in ${\bf u'}$.
         \item [ii)] If $u_i\leq u_i'$ for all $1\le i\le n$ we have
         $$\mathbb{P}\left[\bigcap_{i\in J}(U_i>u_i),\bigcap_{i\in I}(-U_i>u_i)|\bigcap_{i\in J}(U_i>u_i'),\bigcap_{i\in I}(-U_i>u_i')\right]=1,$$
         and then it is nondecreasing in ${\bf u'}$.
         \item[iii)] Let us consider, without loss of generality, $u_i\leq u_i'$ for $i\in J$ and $u_i> u_i'$ for $i\in I$. Then we have 
         \begin{align}\label{eq:case3}\nonumber
             \mathbb{P}[\alpha {\bf U}> {\bf u}| \alpha {\bf U}>{\bf u'}]&=\frac{\mathbb{P}\left[\bigcap_{i=1}^n(\alpha_iU_i>u_i),\bigcap_{i=1}^n(\alpha_iU_i>u_i')\right]}{\mathbb{P}\left[\bigcap_{i=1}^n(\alpha_i U_i>u_i')\right]}\\
             &=\frac{\mathbb{P}\left[\bigcap_{i\in J}(U_i>u_i'),\bigcap_{i\in I}(-U_i>u_i)\right]}{\mathbb{P}\left[\bigcap_{i=1}^n(\alpha_i U_i>u_i')\right]}
         \end{align}
         Since $\mathbb{P}\left[\bigcap_{i=1}^n(\alpha_i U_i>u_i')\right]\geq \mathbb{P}\left[\bigcap_{i=1}^n(\alpha_iU_i>u_i'')\right]$ for all $u_i',u_i''$ such that $u_i'\leq u_i''$ for $1\leq i\leq n$, we obtain that Expression \eqref{eq:case3} is nondecreasing in $u_i'$ for $i\in I$. To prove that it is also nondrecreasing in $u_i'$ for $i\in J$ considering $u_i''$ such that $u_i'\leq u_i''$ for $i\in J$, we need to verify that
         $$\mathbb{P}\left[\bigcap_{i\in I}(-U_i>u_i)|\bigcap_{i=1}^n(\alpha_iU_i>u_i')\right]\leq\mathbb{P}\left[\bigcap_{i\in I}(-U_i>u_i)|\bigcap_{i\in J}(U_i>u_i''),\bigcap_{i\in I}(-U_i>u_i')\right].$$
         This inequality is equivalent to 
             $$\frac{\mathbb{P}\left[\bigcap_{i\in J}(U_i>u_i'),\bigcap_{i\in I}(-U_i>u_i)\right]}{\mathbb{P}\left[\bigcap_{i=1}^n(\alpha_iU_i>u_i')\right]}\leq\frac{\mathbb{P}\left[\bigcap_{i\in J}(U_i>u_i''),\bigcap_{i\in I}(-U_i>u_i)\right]}{\mathbb{P}\left[\bigcap_{i\in J}(U_i>u_i''),\bigcap_{i\in I}(-U_i>u_i')\right]}.$$
        Rewriting this last inequality in terms of copulas, and denoting
        \begin{align*}
        &u^{(1)}=(u_1',\ldots,u_{i_1},\ldots,u_{i_p},\ldots,u_n'),\\
        &u^{(2)}=(u_1',\ldots,u'_{i_1},\ldots,u'_{i_p},\ldots,u_n'),\\
        &u^{(3)}=(u_1'',\ldots,u_{i_1},\ldots,u_{i_p},\ldots,u_n''),\\
        &u^{(4)}=(u_1'',\ldots,u'_{i_1},\ldots,u'_{i_p},\ldots,u_n''),
        \end{align*}we obtain
        \begin{align*}
            &\frac{C_{i_1,\ldots,i_p}(u^{(1)})-\sum_{j\in J}C_{j,i_1,...,i_p}(u^{(1)}))+\sum_{j_1\in J}\sum_{\substack{j_2\in J\\ j_2>j_1}}C_{j_1,j_2,i_1,...,i_p}(u^{(1)})-\cdots+(-1)^{|J|}C(u^{(1)})}{ C_{i_1,...,i_p}(u^{(2)})-\sum_{j\in J}C_{j,i_1,...,i_p}(u^{(2)})+\sum_{j_1\in J}\sum_{\substack{j_2\in J\\ j_2>j_1}}C_{j_1,j_2,i_1,...,i_p}(u^{(2)})-\cdots+(-1)^{|J|}C(u^{(2)})}\\
            \leq & \frac{C_{i_1,...,i_p}(u^{(3)})-\sum_{j\in J}C_{j,i_1,...,i_p}(u^{(3)}))+\sum_{j_1\in J}\sum_{\substack{j_2\in J\\ j_2>j_1}}C_{j_1,j_2,i_1,...,i_p}(u^{(3)})-\cdots+(-1)^{|J|}C(u^{(3)})}{ C_{i_1,...,i_p}(u^{(4)})-\sum_{j\in J}C_{j,i_1,...,i_p}(u^{(4)})+\sum_{j_1\in J}\sum_{\substack{j_2\in J\\ j_2>j_1}}C_{j_1,j_2,i_1,...,i_p}(u^{(4)})-\cdots+(-1)^{|J|}C(u^{(4)})}.
        \end{align*}
        This inequality holds since it is equivalent to Inequality \eqref{eq:mainthmult}.
     \end{itemize}
Therefore, the proof is complete.
\end{proof}

Next, we will provide several examples to illustrate the I$(\alpha)$ concept within the context of the multivariate case.

\begin{example}\label{ex6}
    For all $n\ge 2$, the $n$-copula $M^n$ is I$(\bf 1)$ and I$(-\bf 1)$.
\end{example}

\begin{example}\label{ex7}
For all $n\ge 2$, the $n$-copula $\Pi^n$ is I$(\alpha)$ for any direction $\alpha$.
\end{example}

\begin{example}
    Let $C$ be the $n$-copula given by the convex linear combination of the $n$-copulas $\Pi^n$ and $M^n$, i.e.,
    $$C(\uu)=\theta\Pi^n(\uu)+(1-\theta)M^n(\uu)$$
    for all $\uu\in[0,1]^n$, where $\theta\in[0,1]$. As a consequence of Examples \ref{ex6} and \ref{ex7}, the $n$-copula $C$ is I$({\bf 1})$ and I$(-\bf 1)$ for all $\theta\in[0,1]$.
\end{example}

\begin{example}
    Let $\{C_{\lambda}\}_{\lambda \in [-1,1]}$ be the Farlie-Gumbel-Morgenstern $(FGM)$ one-parameter family of $n$-copulas given by
$$C_{\lambda}(\bf u)=\prod_{i=1}^nu_i\left[1+\lambda\prod_{i=1}^n(1-u_i)\right]$$
for all $\bf u\in[0,1]^n$. Then we have that if $|J|$ is even, $C_\lambda$ is I$(\alpha)$ for $\lambda\in[0,1]$, where $J=\{i\in\{1,...,n\}:\alpha_i=1\}$; and  if $|J|$ is odd, $C_\lambda$ is I$(\alpha)$ for $\lambda\in[-1,0]$.
\end{example}

\section{Conclusions}\label{sec:4}

$\phantom{999}$In this paper we have characterized the monotonicity according to a direction, a new concept of positive dependence, ---particularly the $I(\alpha)$ notion--- by copulas. We started with the bivariate and trivariate cases, due to their simplicity and straightforwardness, aiming to illustrate the procedure to be followed in the multivariate case. Initially, we obtained in both cases a characterization in terms of an inequality involving the associated copula and its margins evaluated at maximum and minimum values, in order to subsequently simplify this characterization for application to specific cases. Throughout this paper, examples of copulas with $I(\alpha)$ dependence have been provided for different values of $\alpha$.

\bigskip


\begin{thebibliography}{99}


\bibitem{Ali1978} Ali, M.M., Mikhail, N.N., Haq, M.S. (1978). A class of bivariate distributions including the bivariate logistic. {\it J. Multivariate. Anal.} {\bf 8}, 405--412.


\bibitem{Durante2016book} Durante, F., Sempi, C. (2016). {\it Principles of Copula Theory}. Chapman $\&$ Hall/CRC, Boca Raton.


\bibitem{Fisher97} Fisher, N.I. (1997). Copulas. In: {\it Encyclopedia of Statistical Sciences}, Vol. 1 (S. Kotz, C.B. Read, D.L. Banks, Eds.), Wiley, New York, pp. 159--163.

\bibitem{Ha70} Harris, R. (1970). A multivariate definition for increasing hazard rate distribution function. {\it Ann. Math. Statist.} {\bf 41}, 713--717. 


\bibitem{Joe1997} Joe, H. (1997). {\it Multivariate Models and Dependence Concepts}. Chapman $\&$ Hall, London.


\bibitem{Mu06} M\"uller, A., Scarsini, M. (2006). Archimedean copulae and positive dependence. {\it J. Multivariate Anal.} {\bf 93}, 434--445.

\bibitem{Navarro2021} Navarro, J., Pellerey, F., Sordo, M.A. (2021). Weak dependence notions and their mutual relationships. {\em Mathematics} {\bf 9}, article 81.

\bibitem{Ne06} Nelsen, R.B. (2006). {\it An Introduction to Copulas. (2nd ed.)}. Springer, New York.

\bibitem{Que2024} Quesada-Molina, J.J, Úbeda-Flores, M. (2024). Monotonic random variables according to a direction. {\em Axioms} {\bf 13}, article 275.


\bibitem{Sk59} Sklar, A. (1959). Fonctions de r\'epartition $\grave{\rm a}$ $n$ dimensions et leurs marges. {\it Publ. Inst. Statist. Univ. Paris} {\bf 8}, 229--231.

\bibitem{Ub2017} \'Ubeda-Flores, M., Fern\'andez-S\'anchez, J. (2017). Sklar's theorem: The cornerstone of the Theory of Copulas. In: {\it Copulas and Dependence Models with Applications} (M. \'Ubeda-Flores, E. de Amo Artero, F. Durante, J. Fern\'andez S\'anchez, Eds.), Springer, Cham, pp. 241--258.

\bibitem{Wei2014} Wei, Z., Wang, T., Panichkitkosolkul, W. (2014). Dependence and association concepts through copulas. In: {\it Modeling Dependence in Econometrics - Advances in Intelligent Systems and Computing}, Vol. 251 (V.N. Huynh, V. Kreinovich, S. Sriboonchitta, Eds.), Springer, Cham, pp. 113--126.
\end{thebibliography}
\end{document}